\newtheorem{thm}{Theorem}
\newtheorem{prop}[thm]{Proposition}
\newtheorem{lem}[thm]{Lemma}
\newtheorem{cor}[thm]{Corollary}
\newtheorem{rem}[thm]{Remark}
\newcommand{\R}{\mathbb{R}}
\newcommand{\T}{\mathbb{T}^2}
\newcommand{\Z}{\mathbb{Z}}
\newcommand{\Lv}{\frac{\partial L}{\partial v}}
\newcommand{\m}{\mathcal}
\newcommand{\supp}{Supp}
\begin{document}
\title[Tonelli Lagrangian systems  on $\T$]{A note on Tonelli Lagrangian systems on $\T$ with  positive topological entropy on high energy level}

\author[J. G. Damasceno]{Josu\'e G. Damasceno$^1$}
\address{Universidade Federal de Ouro Preto, R. Diogo  de Vasconcelos, 122, Pilar, Ouro Preto, MG, Brasil} 
\email{$^1$josue@ufop.edu.br}

\author[J. A. G. Miranda]{Jos\'e Ant\^onio G. Miranda$^2$}
\address{Universidade Federal de Minas Gerais, Av. Ant\^onio Carlos 6627, 31270-901, Belo Horizonte, MG, Brasil.}
\email{$^2$jan@mat.ufmg.br}

\author[L. G. Perona]{Luiz Gustavo Perona$^3$}
\address{Universidade Federal de Vi\c{c}osa, Campus Florestal, Rodovia LMG 818, km 6, 35.690-000, Florestal, MG, Brasil} 
\email{$^3$lgperona@ufv.br}

\subjclass[2000]{ 
37B40, 
 37J50,  
 37J99%
 }

\begin{abstract}
In this work we study the dynamical behavior   Tonelli Lagrangian systems  defined on the tangent bundle of the torus   $\T=\R^2 / \Z^2$.
We prove that the  Lagrangian flow restricted to a high energy level $ E_L^{-1}(c)$ (i.e $ c> c_0(L)$) has  positive topological entropy if the flow   satisfies the Kupka-Smale propriety in   $ E_L^{-1}(c)$ (i.e, all closed orbit with energy $ c$ are hyperbolic  or elliptic and all heteroclinic intersections  are transverse on $ E_L^{-1}(c)$).
The proof requires the use of well-known results in Aubry-Mather's Theory. 
\end{abstract}

\maketitle


\section{Introduction}

Let $ \T=\R^2 / \Z^2 $ endowed  a Riemannian metric $ \langle \cdot, \cdot \rangle$. 
A Tonelli Lagrangian  on $ \T$ is a  smooth function $L: T\T \to \R$  that   satisfies  the two conditions:
\begin{itemize}
\item {\it convexity}:
 for each fiber $ T_x \T \cong \R^2$, the restriction $
L(x,\cdot):T_x \T \to \R $ has positive defined  Hessian, 
\item {\it superlinearity}:  
$\lim_{\| v\|\rightarrow \infty} \frac{L(x,v)}{\|v\|} = \infty ,$
uniformly  in $ x \in \T$.
\end{itemize}
 
The action of $L$ over an absolutely continuous curve $\gamma :[a,b] \to \T$
 is defined by
 \[A_L(\gamma) = \int_a^b L(\gamma(t),\dot \gamma(t))\ dt.\]
The  extremal  curves  for the action are given by  solutions of the
{\it Euler-Lagrange equation} which  in local coordinates can be written as:
\[\frac{\partial L}{ \partial x}  - \frac{d}{dt }  \frac{\partial L}{\partial v} =0.
\]
The  {\it Lagrangian flow}  $ \phi_t:T\T \to T\T $ is defined by $ \phi_t( x,v) = (\gamma(t),\dot{\gamma}(t))$ where $ \gamma:\R \to \T $ is the solution of {\it Euler-Lagrange equation}, 
with the initial conditions   $ \gamma(0)=x$ and $\dot{\gamma}(0)=v.$

The  {\it energy function} $  E_L : T\T \rightarrow \R $ is defined by
\begin{equation}
\label{energy function}
 E_{L}(x,v)=\left\langle \frac{\partial  L}{\partial v} (x,v)\ ,\  v \ \right\rangle -L(x,v).
\end{equation}
The  subsets   $ E_L^{-1}(c) \subset T\T $  are called {\it energy levels } and they
 are invariant by the Lagrangian flow of $ L$.   
Note that  the superlinearity condition implies that any  non-empty  energy levels  are  compact.
 Therefore  the flow $ \phi_t$  is defined for all $ t \in \R$. 
 
 The Lagrangian flow of $ L $  is conjugated to a  Hamiltonian flow on $ T^*\T $, with the canonical symplectic structure, by the Legendre transformation  $ \mathcal L : T\T \rightarrow T^*\T $ given by:
\[ \mathcal L (x,v) = \left( x, \Lv (x,v)\right).\]
The corresponding Hamiltonian  $ H: T^*\T \rightarrow \R $ is
\[ H(x,p) = \max_{v \in T_x\T} \{ p(v)- L(x,v)\}\]
 and  we have the Fenchel inequality 
\[p(v) \leq H(x,p) + L(x,v)\]
with equality, if only if, $ (x,p)=\m L( x,v)
 $ or equivalently $ p=\frac{\partial L}{\partial v}(x,v) \in T_x^*\T$.   Therefore, by \eqref{energy function},  \[ H\left(x, \frac{\partial L}{\partial v}(x,v) \right) = E (x,v).\]  
 Given a nonempty energy level $ E_L^{-1}(c) $, the  set $ H^{-1}(c) := \m L \left(  E_L^{-1}(c) \right) \subset T^* \T $ is called Hamiltonian level.

 \bigskip

We denote by $ h_{top}(L,c)$ the topological entropy of the Lagrangian flow  $ \left. \phi_t \right|_{E_L^{-1}(c)} $, for any nonempty energy level $ E_L^{-1}(c)$. 
The {\it topological entropy}, is a invariant that, roughly speaking, measures its orbits structure  complexity.  The relevant question about the topological entropy is whether it is positive or vanishes.  Namely, if $\theta\in E_L^{-1}(c)$ and $T,\delta>0$, we define the  $(\delta,T)-${\it dynamical ball} centered at  $\theta$ as 
\[B(\theta, \delta, T)=\{v\in E_L^{-1}(c)\,:\,d(\phi_t(v),\phi_t(\theta))<\delta\mbox{ for all }t\in[0,T]\},\]
 where $d$ is the distance function in $E_L^{-1}(c)$. 
 Let $N_{\delta}(T)$ be the minimal quantity of $(\delta,T)-${\it dynamical ball} needed to cover $E_L^{-1}(c)$. 
 The topological entropy is the limit  $\delta\to 0$ of the exponential growth rate of  $N_\delta(T)$, that is \[h_{top}(L,c):=\displaystyle\lim_{\delta\to 0}\limsup_{T\to\infty}\frac{1}{T}\log N_{\delta}(T).\] Thus, if $h_{top}(L,c)>0$, some dynamical balls must contract exponentially at least in one direction.
\bigskip

 For example, if  $ \langle \cdot, \cdot \rangle$ denotes  the flat metric and   $ L(x,v) =1/2  \langle v,v \rangle $, then  the corresponding Lagrangian flow is the geodesic flow on the flat torus, that is  given by 
 \[ \phi_t(x,v) = ( x+tv \mod \Z^2 ,v ).\]
It follows from straight computations, that $ h_{top}(L,c) =0 $, for all $ c>0$. In this example, the corresponding  Hamiltonian flow is integrable. For an integrable Hamiltonian system on a four dimensional symplectic manifold  under certain regularity assumptions (see \cite{Paternain:1991}), 
 the topological entropy of the  Hamiltonian flow  restricted to a regular compact  energy level is vanishes.


In \cite{Schroder:2016}, J. P. Schr\"oder go toward  a partial answer about the integrability reverse claim of the Paternain's theorem\cite{Paternain:1991} which  is false in general as one already knew the long time ago \cite{Katok:1973}. 
Schr\"oder  proved that if the  topological entropy of the  Lagrangian flow on the level above the Ma\~n\'e  critical value is vanishes thus, for all direction $\zeta \in \mathbb{S}^1$, 
there are  invariant Lipschitz graphs  $\mathcal{T}_{\zeta}$ ($\zeta$ with irrational slope), 
 $\mathcal{T}_{\zeta}^{\pm}$( $\zeta$ with rational slope) over $\mathbb{T}^2$, contained in $\{E=e\}$
whose  complement of its union  is a tubular  neighborhood  of $\mathcal{T}_{\zeta}$ and the lifted orbits from $\mathcal{T}_{\zeta}$,  
$\mathcal{T}_{\zeta}^{\pm}$ on universal cover $\mathbb{R}^2$  going to $\infty$, 
i.e heteroclinic orbits.  He use  the gap condition to prove that if in a strip between two neighboring periodic minimizes no foliation by heteroclinic minimizes exists, 
then there are  instability regions which imply  in its turn positive entropy.

\bigskip

The  Ma\~n\'e's {\it  strict critical value} of $L$,   is the real number $ c_0(L) $ given by 
\begin{equation}
\label{c_0(L)} 
c_0(L)=\inf\{k\in\mathbb{R}:\,A_{L+k}(\gamma)\geq 0, \ \mbox{for all contractible closed curve on  }  \T 
\ \}.
\footnote{On arbitrary  closed manifold,  this equality defines the universal  critical value $c_u(L)$. The value $ c_0(L)$ is the infimum value of $ k\in \R$ such that the $L+k$ action is positive on the set of all closed curves tha are homologous to zero. Of course, $  c_u(L) \leq c_0(L)$, and  $ c_u(L)=c_0(L)$ for systems on $\T$.  } 
 \end{equation}

Here   we prove that

\begin{thm}\label{main-thm}
 Let $L:T\T\rightarrow \mathbb{R}$ a  Tonelli Lagrangian and let  $c>c_0(L)$. Suppose that the Lagrangian flows restricted to a energy level $ E_L^{-1}(c)$  satisfies:
 \begin{enumerate}
\item
all closed orbits  in $E_L^{-1}(c)$ are hyperbolic or elliptic, and
\item
all heteroclinic intersections in $E_L^{-1}(c)$ are transverse.
\end{enumerate} 
Then $ h_{top}(L,c)>0 $.
\end{thm}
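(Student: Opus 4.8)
The plan is to argue by contradiction, combining the structure theory of minimizers above the Ma\~n\'e critical value with the two Kupka--Smale hypotheses. Assume, contrary to the claim, that $h_{top}(L,c)=0$. First I would record what Aubry--Mather theory supplies on $E_L^{-1}(c)$ for $c>c_0(L)$: for every rational direction $\zeta\in\mathbb{S}^1$ there exist periodic action minimizers, and any two minimizers of the same direction project to \emph{disjoint} closed curves on $\T$. A periodic minimizer carries no conjugate points, so the transverse ($2\times 2$ symplectic) Poincar\'e return map along it cannot be elliptic; hypothesis (1) then upgrades ``hyperbolic or elliptic'' to hyperbolic for every such orbit. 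Since hyperbolic closed orbits are isolated in $E_L^{-1}(c)$, the minimizers of a fixed rational direction cannot foliate $\T$, so they leave complementary strips each bounded by two neighboring periodic minimizers $\gamma_1,\gamma_2$.

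The second step invokes Schr\"oder's dichotomy. Under the standing assumption $h_{top}(L,c)=0$, the result of \cite{Schroder:2016} excludes the instability regions that would otherwise force positive entropy; consequently every strip between neighboring periodic minimizers $\gamma_1,\gamma_2$ must be foliated by heteroclinic minimizers, each asymptotic to $\gamma_1$ in backward time and to $\gamma_2$ in forward time. In other words, a full one-parameter family of orbits of $\phi_t$ lies simultaneously in $W^u(\gamma_1)$ and in $W^s(\gamma_2)$.

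The third step extracts the contradiction from a dimension count. The level $E_L^{-1}(c)$ is $3$-dimensional, so each hyperbolic periodic orbit has a $2$-dimensional stable and a $2$-dimensional unstable manifold. A foliation by heteroclinic orbits fills a $2$-dimensional surface $S\subset W^u(\gamma_1)\cap W^s(\gamma_2)$, and along $S$ the tangent spaces obey $T_xW^u(\gamma_1)=T_xS=T_xW^s(\gamma_2)$, whence $\dim\big(T_xW^u(\gamma_1)+T_xW^s(\gamma_2)\big)=2<3$. Thus this heteroclinic intersection is nowhere transverse, contradicting hypothesis (2). Hence the assumption $h_{top}(L,c)=0$ is untenable and $h_{top}(L,c)>0$.

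I expect the delicate points to be two. The first is the assertion that a periodic minimizer is never elliptic: this rests on the absence of conjugate points for action minimizers and the resulting constraint on the eigenvalues of the linearized Poincar\'e map, and it must be phrased precisely so that hypothesis (1) genuinely yields hyperbolicity, hence isolation and the existence of gaps. The second, and the main obstacle, is the careful extraction from \cite{Schroder:2016} that vanishing entropy forces a heteroclinic foliation inside \emph{every} gap between neighboring minimizers, together with the identification of ``foliation by heteroclinic minimizers'' with a tangential coincidence of invariant manifolds; once this translation is in place, hypothesis (2) closes the argument at once. (A more direct alternative would bypass the contradiction: the same hyperbolicity and gap structure, combined with Aubry--Mather connecting orbits in both time directions and the transversality in (2), produces a transverse heteroclinic cycle and hence a horseshoe, but this route requires a Smale--Birkhoff-type construction that the contradiction argument conveniently outsources to \cite{Schroder:2016}.)
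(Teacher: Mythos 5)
Your proposal is correct in outline, but it follows a genuinely different route from the paper. The paper never argues by contradiction and never invokes Schr\"oder's theorem: it constructs positive entropy directly from Aubry--Mather theory. Concretely, Lemma~\ref{lemmaA1} (superlinearity of $\beta$ plus an intermediate value argument on a ray of subderivatives) produces a class $[\omega_0]$ with $\alpha([\omega_0])=c$ whose minimizing measures have rational rotation; Massart's theorem (Proposition~\ref{p-h-rational}), the graph theorem and condition (1) then show that $\widetilde{\mathcal{M}}_L([\omega_0])$ is a \emph{finite} union of hyperbolic closed orbits (Lemma~\ref{lem_uniao_finita}; your ``no conjugate points $\Rightarrow$ not elliptic'' step is exactly Remark~\ref{rm hyperbolic}). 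The proof then splits into cases: if $\widetilde{\mathcal{A}}_L([\omega_0])\neq\widetilde{\mathcal{M}}_L([\omega_0])$, the extra Aubry orbits are heteroclinic, transverse by (2), and chain recurrence of the Aubry set (Theorem~\ref{T-rec-estat}) closes them into a transverse heteroclinic cycle (a transverse homoclinic orbit if the Mather set is a single orbit); if the two sets coincide and there are at least two static classes, the Contreras--Paternain connecting theorem (Theorem~\ref{T-conect-stat}) yields semi-static orbits realizing $\Lambda_1\preceq\Lambda_2$ and $\Lambda_2\preceq\Lambda_1$, again a transverse cycle; if there is a single static class, lifting to the double cover $\mathbb{R}^2/(2\mathbb{Z}\times\mathbb{Z})$ creates two static classes whose connecting orbit projects to a transverse homoclinic orbit. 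In every case positive entropy follows from the Birkhoff--Smale horseshoe \cite{Katok_Hasselblatt:1995}. The trade-off: your argument is much shorter and conceptually clean, but it outsources the entire dynamical content to \cite{Schroder:2016}, whose proof is at least as heavy as the paper's whole argument, and it is non-constructive (it shows entropy cannot vanish without exhibiting any hyperbolic structure), whereas the paper produces explicit transverse heteroclinic cycles.

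Two points in your write-up need repair, though neither is fatal. First, the invariant graphs coming from \cite{Schroder:2016} are only Lipschitz, so the identity $T_xW^u(\gamma_1)=T_xS=T_xW^s(\gamma_2)$ is not literally available. Replace it by a dimension argument: if every intersection point of $W^u(\gamma_1)$ and $W^s(\gamma_2)$ were transverse, then near each such point the intersection would be a one-dimensional manifold, which cannot contain a set of topological dimension two such as the Lipschitz graph $S$ over the open strip; alternatively, use Rademacher's theorem to obtain the tangency at almost every point of $S$. Second, the precise statement you need from \cite{Schroder:2016} --- that vanishing entropy forces a heteroclinic foliation of \emph{every} gap between neighbouring periodic minimizers, at every energy $c>c_0(L)$ --- must be checked against what is actually proved there; this reading agrees with how the present paper's introduction summarizes Schr\"oder's result, so it is defensible, but that verification is where all the real difficulty of your proof is hidden.
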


Let $ C^r(\T) $ be the set of potentials $ u:\T \to \R $ of class $ C^r $ endowed whit the $C^r$-topology.
We recall that a subset $ \mathcal O  \subset  \m C^r (\T) $ is called  {\it residual} if it contains a countable intersection of open and dense subsets. 
 In \cite{Oliveira:2008}, E. Oliveira proved a version of the
Kupka-Smale Theorem  for  Tonelli Hamiltonian  and  Lagrangian systems on  any closed surfaces.  
 More precisely, it follows from \cite{Oliveira:2008} that,  for each $ c\in \R $, there exists a residual set 
$ \m{KS} (c) \subset \m C^r(\T) $ such that every  Hamiltonian  
$ H_u=H + u $, with  $ u \in  \m{KS}$, satisfies the Kupka-Smale propriety, i.e, all closed orbit with energy $ c$ are hyperbolic  or elliptic and all heteroclinic intersections  are transverse on $ E_L^{-1}(c)$.
See also  \cite{RR:2011}, where L. Rifford and R. Ruggiero proved  the Kupka-Smale Theorem for Tonelli Lagrangian systems on closed  manifolds of any dimension.

So, if we take   the residual subset $ \m{KS} (c) \subset \m C^r(\T) $ given by the Kupka-Smale Theorem, and by the  continuity of the critical values, (cf. Lemma~5.1 in \cite{Contreras_Paternain:2002a}) , we have the following  corollary.

\begin{cor} 
\label{cor}
Given  $c>c_0(L)$ there exist a smooth potential $ u:\T \to \R$ of $ C^r$-norm arbitrarily small (for any $ r \geq  2$) such that  
$ h_{top}(L-u,c) >0$. 
\end{cor}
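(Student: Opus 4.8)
The plan is to deduce the corollary directly from Theorem~\ref{main-thm}, applied to a suitable small perturbation $L-u$ of $L$, by combining the Kupka--Smale genericity result with the continuity of the critical value. First I would record the effect of a potential perturbation. For $u \in \m C^r(\T)$ the Lagrangian $L_u := L-u$ is again Tonelli, since subtracting a function of $x$ alone changes neither the fiberwise convexity of the Hessian nor the superlinearity of $L$. Its Legendre transform is the Hamiltonian $H_u = H+u$, so the energy level $E_{L_u}^{-1}(c)$ is carried by $\m L$ onto the Hamiltonian level $H_u^{-1}(c)$. In particular, the Kupka--Smale property for $H_u$ at energy $c$ is precisely the conjunction of hypotheses (1) and (2) of Theorem~\ref{main-thm} for the flow of $L_u$ restricted to $E_{L_u}^{-1}(c)$.

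Next I would secure the supercriticality after perturbing. Since $c>c_0(L)$, the continuity of the critical value (Lemma~5.1 in \cite{Contreras_Paternain:2002a}) provides an open neighborhood $\m U \subset \m C^r(\T)$ of the zero potential on which $u \mapsto c_0(L_u)$ stays below $c$; that is, $c>c_0(L-u)$ for every $u \in \m U$. Then I would invoke the Kupka--Smale theorem for Tonelli systems (\cite{Oliveira:2008}, \cite{RR:2011}): the set $\m{KS}(c) \subset \m C^r(\T)$ of potentials for which $H_u$ satisfies the Kupka--Smale property at energy $c$ is residual, hence dense. Because $\m U$ is open and $\m{KS}(c)$ is dense, the intersection $\m U \cap \m{KS}(c)$ is nonempty, and by shrinking $\m U$ I may take $u$ in this intersection with $C^r$-norm as small as prescribed.

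Finally, for such a $u$ the Lagrangian $L-u$ is Tonelli, satisfies $c>c_0(L-u)$, and its flow on $E_{L-u}^{-1}(c)$ satisfies conditions (1) and (2); Theorem~\ref{main-thm} then yields $h_{top}(L-u,c)>0$, which is the assertion of the corollary. The argument is not a deep new difficulty but rather the bookkeeping of two facts that must hold simultaneously: the value $c$ must remain strictly above $c_0(L-u)$ while the perturbation is pushed into the generic set. The only delicate point is therefore the continuity of $c_0$, which is exactly what guarantees that the open supercritical set $\m U$ and the dense Kupka--Smale set $\m{KS}(c)$ genuinely overlap; everything else is immediate from Theorem~\ref{main-thm} and the cited genericity result.
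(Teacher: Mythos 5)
Your proposal is correct and follows exactly the paper's intended argument: combine the residual Kupka--Smale set $\m{KS}(c)$ of \cite{Oliveira:2008} (or \cite{RR:2011}) with the continuity of the critical value (Lemma~5.1 in \cite{Contreras_Paternain:2002a}) to find an arbitrarily $C^r$-small $u$ with $c>c_0(L-u)$ and the flow of $L-u$ Kupka--Smale on $E_{L-u}^{-1}(c)$, then apply Theorem~\ref{main-thm}. The paper leaves these details implicit, and your write-up (including the observation that the Legendre transform of $L-u$ is $H+u$, so the Hamiltonian and Lagrangian formulations of the hypotheses coincide) fills them in faithfully.
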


\section{The Mather and Aubry sets}
\label{Review of the Mather-Aubry Theory}
In this section we  recall the  definitions  of the Mather sets  and Aubry sets   for the case of a (autonomous) Tonelli  Lagrangian on the torus. The Aubry-Mather's theory was  introduced by J. Mather  in \cite{Mather:1991,Mather:1993} for convex, superlinear and time periodic   Lagrangian systems on any closed manifolds. The details and proofs of the main results in this section can be seen in the original works  of J. Mather cited above.

\smallskip

Let us  recall the main  concepts introduced by J. Mather in
\cite{Mather:1991}.  
We denote by $ \mathfrak B(L)$  the set of  all  Borel  probability measures, whit compact support,    that are invariant by the Lagrangian flow of $ L$. 
By duality,  given $ \mu \in \mathfrak B(L) $,
there is a unique homology class $ \rho(\mu) \in H_1(\T,\R)$ such
that 
\begin{equation}
\label{def rho}
 \langle \rho(\mu),[\omega]\rangle  = \int_{T\T} \omega \
d\mu,
\end{equation}
 for any closed 1-form $\omega $ on $ \T$.

Then, the Mather's {\it $\beta$-function} 
 is defined by 
\[ \beta(h) = \inf \left\{
\int_{T\T} L\ d\mu : \ \mu   \in \mathfrak B(L) \mbox{ with }  \rho(\mu)=h\    \ \right\}.\]
 The function  $\beta:H_1(M,\R)\rightarrow \R $ is  convex and superlinear.
  A  measure $ \mu \in \mathfrak B(L) $ that satisfies 
 \[ \int_{T\T} L\ d\mu = \beta(\rho(\mu))\]
 is called a {\it
$\rho(\mu)$-minimizing measure}.

The  Mather's {\it $\alpha$- 
 function} can be defined as the  convex dual (or conjugate) function of $ \beta $, i.e. $ \alpha = \beta^*:
H^1(M,\R)\rightarrow \R $ is given by the so-called Fenchel transformation 
\[ \alpha ([\omega]) = \sup_{h \in H_1(M,\R)}\{<[\omega],h>- \beta(h) \}=-  \inf_{\mu \in \mathfrak B(L)} \left\{  \int_{TM} L-\omega\ d\mu \right\}.\]
 By convex duality, we have that $ \alpha $  is also convex and superlinear, and $\alpha^* = \beta$. 
Moreover, a measure $ \mu_0 $ is $\rho(\mu_0)$-minimizing if and only if there is a closed 1-form  $\omega_0 $, such that $ \int_{TM} L-\omega_0\ d\mu_0 = - \alpha([\omega_0])$. 
Such a class $ [\omega_0] \in H^1(M,\R) $ is  called a {\it subderivative } of $ \beta $  at the point $ \rho(\mu_0)$.

We say that  $ \mu \in \mathfrak B(L)$  is  a  {\it $[\omega]$-minimizing measure} of $ L$ 
if 
\[
 \int_{T\T} L-\omega \ d\mu  
= \min\left\{  \int_{T\T} L-\omega\ d\nu  :  \nu \in \mathfrak B(L)\right\}= -\alpha([\omega]).
\]

Let $\mathfrak M_L([\omega]) \subset \mathfrak B(L) $ be the set of all $[\omega]$-minimizing measures 
 (it only depends on the cohomology class $[\omega]$).  
The ergodic components  of a $[\omega]$-minimizing measure are also  $[\omega]$-minimizing measures,
so the set $\mathfrak M_L([\omega]) $ is a simplex whose extremal measures are ergodic $[\omega]$-minimizing measures. In particular  $\mathfrak M_L([\omega]) $ is a compact subset of $ \mathfrak B(L)$ with the weak$^*$-topology. 

 For each $ [\omega] \in  H^1(\T,\R)$, we define the {\it Mather set  of cohomology class $[\omega]$} as: 
\[  \m{\widetilde M}_L([\omega])= \overline{\bigcup_{\mu \in \mathfrak M_L ([\omega])} \supp( \mu )} .\]  
We set $ \pi(\m{\widetilde M}_L([\omega]))={\m M_L}([\omega])$, and call it the {\it projected Mather set}, where $ \pi:T\T \to \T$ denotes the canonical projection. The celebrated {\it Graph Theorem} proved by J. Mather  in \cite[Theorem 2]{Mather:1991}, 
asserts that   $ \m{\widetilde M}_L([\omega]) $ is non-empty, compact,  invariant by the Euler-Lagrange flow  and the map $ \pi|_{\m{\widetilde M}_L([\omega])}:   \m{\widetilde M}_L([\omega]) \to  {\m M_L}([\omega]) $ is a bi-Lipschitz homeomorphism. 
 
 \bigskip
    
   Following J. Mather in \cite{Mather:1993}, for $ t>0$ and $ x,y \in \T$, define 
 the {\it  action potential} for the  Lagrangian deformed by a closed  1-form $\omega $ as: 
 \[  \Phi_\omega(x,y,t)=\inf \left\{   \int_0^t L(\gamma(s),\dot \gamma(s)) - \omega_{\gamma(s)} (\dot \gamma(s))  \ ds \right\}, \]
 where the infimum is taken over all absolutely  continuous curves $  \gamma:[0,t]\to \T $ such that $ \gamma(0)=x$ and $ \gamma(t)=y$. The infimum is in fact a minimum 
 by  Tonelli's Theorem. 

We define the {\it Peierls barrier} for the  Lagrangian $ L-\omega $ as the function $ h_\omega: \T \times \T \to \R$ given by: 
\[h_\omega(x,y) = \liminf_{t \to +\infty} \left\{ \Phi_\omega (x,y,t) + \alpha([\omega]) t\right\} ,\]
and the {\it projected Aubry set for the cohomology class} $ [\omega]\in H^1(\T,\R)$ as 
 \[\m A_L([\omega])=\left\{ \ x \ \in \T: \ h_\omega(x,x) = 0 \ \right\}.
 \]  By symmetrizing $ h_\omega $, we define the  {\it semidistance}   $\delta_{[\omega]} $ on $ \m A_L([\omega])$:
\[ \delta_{[\omega]}(x,y)=  h_\omega(x,y)+ h_\omega(y,x). \]
This function $\delta_{[\omega]}$ is non-negative and satisfies the triangle inequality. 

We define the {\it Aubry set}  (that is also called static set) as   the invariant set 
  \[ \m{\widetilde A}_L([\omega])=\left\{ (x,v) \in T\T : \pi \circ \phi_t(x,v) \in   \m A_L([\omega]), \ \forall \ t \in \R \right\}.
  \] 
By definition  $ \pi(\m{\widetilde A}_L([\omega]))= {\m A_L}([\omega])$. In \cite[Theorem 6.1]{Mather:1993}, J. Mather proved that this set is  compact, 
 $ \m{\widetilde M}_L([\omega]) \subseteq \m{\widetilde A}_L([\omega]) $ and the extension of the graph theorem to the Aubry set, i.e., the 
mappings  $ \pi|_{\m{\widetilde A}_L([\omega])}:   \m{\widetilde A}_L([\omega]) \to  {\m A_L}([\omega]) $ is a bi-Lipschitz homeomorphism.

Finally, we define the {\it Ma\~n\'e set of cohomology class $[\omega]$}, that we denote by $\m{\widetilde N}_L([\omega])$,
as the set  of orbits $ \phi_t(\theta)=(\gamma(t),\dot\gamma(t))\in T\T$ such that, for all $ a<b \in \R $, the  trajectories 
$ \gamma:\R \to \T$  satisfy 
\[   \int_a^b L(\gamma(s),\dot \gamma(s)) - \omega_{\gamma(s)} (\dot \gamma(s)) +\alpha([\omega]) \ ds  =\inf_{t>0} \left\{ \Phi_\omega (\gamma(a),\gamma(b),t) + \alpha([\omega]) t\right\}. \]
These curves that satisfy the above equality  are also called semi-static curves or $[\omega]$-minimizing curves.

\bigskip

Let us now state some important properties and results on these  invariant sets that we going to use in the proof of the Theorem~\ref{main-thm}.

Using the Mather's $ \alpha$-function, we have the following equivalent definition of the 
Ma\~n\'e strict critical value \eqref{c_0(L)} (cf. \cite{Contreras_Iturriaga_Coloquio:1999a})
\[ c_0(L)= \min\left\{ \alpha([\omega]): [\omega] \in H^1(\T,\R)\right\}=-\beta(0).\]

In \cite{Carneiro:1995},  M. J. Carneiro proved  
that  the set $\m{\widetilde M}_L([\omega])$ is contained in the energy level $  E_L^{-1}(\alpha([\omega]))$ and by  the above characterizations (see for example \cite{Contreras_Iturriaga_Coloquio:1999a}), we have that
  \[ \m{\widetilde M}_L([\omega]) \subseteq  \m{\widetilde A}_L([\omega])
\subseteq \m{\widetilde N}_L([\omega]) \subseteq E^{-1}(\alpha([\omega])  .\]

By the graph property, we can  define an  equivalence relation in
the set $ \m{\widetilde A}_L([\omega]) $: two elements $ \theta_1 $ and $
\theta_2 \in \m{\widetilde A}_L([\omega]) $ are equivalent when $ \delta_{[\omega]}(
\pi(\theta_1),\pi(\theta_2))=0$. The equivalence relation breaks $
\m{\widetilde A}_L([\omega]) $ into classes that are called {\it static classes
of L}. Let $ {\bf \Lambda}_L([\omega]) $ be the set of all static classes. We
define a partial order  $ \preceq $ in $ {\bf \Lambda}_L([\omega]) $ by: (i) $
\preceq $ is reflexive and transitive,
 (ii) if there is $ \theta \in \m{\widetilde N}_L([\omega]) $, such that the  $
\alpha$-limit set  $ \alpha (\theta) \subset \Lambda_i$ and the  $
\omega$-limit set $ \omega(\theta)\subset \Lambda_j $, then $
\Lambda_i  \preceq \Lambda_j$. The following theorem was proved by
G. Contreras and G. Paternain in \cite{Contreras_Paternain:2002a}.
\begin{thm}
\label{T-conect-stat}
 Suppose  that the number of static class is finite,
 then given $ \Lambda_i $ and $ \Lambda_j $ in ${\bf \Lambda}_L([\omega])$,
 we have that $ \Lambda_i \preceq \Lambda_j $.
 \end{thm}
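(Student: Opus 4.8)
The plan is to reformulate the statement as a strong-connectivity property of a finite directed graph and then to establish the required connecting orbits variationally. First I would recall the metric tools already implicit in the excerpt: the Ma\~n\'e critical potential $\Phi_\omega^c(x,y):=\inf_{t>0}\{\Phi_\omega(x,y,t)+\alpha([\omega])t\}$, which is Lipschitz and satisfies the triangle inequality $\Phi_\omega^c(x,z)\le\Phi_\omega^c(x,y)+\Phi_\omega^c(y,z)$, together with the Peierls barrier $h_\omega$, which dominates $\Phi_\omega^c$ and obeys the mixed inequalities $h_\omega(x,z)\le h_\omega(x,y)+\Phi_\omega^c(y,z)$ and $h_\omega(x,z)\le\Phi_\omega^c(x,y)+h_\omega(y,z)$. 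The static classes are exactly the fibers of the semidistance $\delta_{[\omega]}$ on $\m A_L([\omega])$, and I would invoke Ma\~n\'e's structural fact that the $\alpha$- and $\omega$-limit sets of any orbit in $\m{\widetilde N}_L([\omega])$ are contained in $\m{\widetilde A}_L([\omega])$, hence inside static classes. With this, a single heteroclinic connection $\Lambda_i\preceq\Lambda_j$ is precisely the existence of a semistatic orbit whose backward and forward limits sit in $\Lambda_i$ and $\Lambda_j$ respectively, and $\preceq$ is the reflexive--transitive closure of this relation. Since the number of classes $n$ is finite, the goal ``$\Lambda_i\preceq\Lambda_j$ for all $i,j$'' is equivalent to saying that the directed graph $G$ on vertices $\Lambda_1,\dots,\Lambda_n$, with an edge wherever a connecting orbit exists, is \emph{strongly connected}.

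The second step is to produce connecting orbits. Fixing projected points $p_i\in\Lambda_i$ and $p_j\in\Lambda_j$, I would take Tonelli minimizers $\gamma_k:[0,t_k]\to\T$ from $p_i$ to $p_j$ with $t_k\to\infty$ whose actions approximate the value defining $h_\omega(p_i,p_j)$. Because the relevant curves carry energy pinned to $E_L^{-1}(\alpha([\omega]))$ --- a compact level by superlinearity --- their speeds are uniformly bounded, so Arzel\`a--Ascoli lets me extract a limit trajectory that is a globally calibrated, hence semistatic, orbit $\theta\in\m{\widetilde N}_L([\omega])$. By the structural fact above, $\alpha(\theta)$ and $\omega(\theta)$ lie in static classes, so $\theta$ witnesses an edge of $G$; iterating this construction and concatenating calibrated segments shows that $h_\omega(p_i,p_j)$ is realized through a finite chain of heteroclinic connections passing only through static classes, i.e. through a directed path in $G$.

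The third step upgrades the existence of paths to strong connectivity, and this is where finiteness is essential. I would argue by contradiction: if $G$ failed to be strongly connected, the finite vertex set would split, via the condensation of $G$ into its strongly connected components, into at least one terminal component $S$ (admitting no outgoing edges) together with its complement. A class $\Lambda\in S$ would then admit no connecting orbit escaping $S$, forcing every semistatic orbit issuing from $\Lambda$ to remain asymptotic to $S$; tracking this through the mixed triangle inequalities for $h_\omega$ and $\delta_{[\omega]}$, and using that $\alpha([\omega])$ is exactly the critical value (so no curve beats the barrier), I would show that the barrier between a point of $S$ and a point outside $S$ can be strictly lowered, contradicting the definition of $h_\omega$ --- equivalently, contradicting chain transitivity of the flow on the compact invariant set $\m{\widetilde N}_L([\omega])\subset E_L^{-1}(\alpha([\omega]))$. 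Hence $G$ is strongly connected and $\Lambda_i\preceq\Lambda_j$ for all $i,j$.

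The main obstacle is precisely this last step: controlling which static classes capture the $\alpha$- and $\omega$-limits of the extracted connecting orbits, and converting the pairwise barrier data into a genuine cycle through every class. The delicate bookkeeping lies in combining the triangle inequalities for $\Phi_\omega^c$ and $h_\omega$ with the finiteness hypothesis to rule out a proper, forward-invariant family of classes; the variational extraction of the second step is comparatively routine, resting only on the a priori compactness of minimizers at a fixed energy level.
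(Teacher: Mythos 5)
Your proposal cannot be compared line-by-line with an argument in the paper, because the paper does not prove Theorem~\ref{T-conect-stat} at all: it is imported verbatim from Contreras--Paternain \cite{Contreras_Paternain:2002a}, where it is one of the main results. Measured against the proof given there, your plan has the right overall shape --- it is indeed proved by a variational extraction of connecting orbits from near-minimizing curves, with finiteness of the set of classes used to separate them by disjoint neighborhoods --- but the step you yourself flag as ``the main obstacle'' (controlling which static classes capture the $\alpha$- and $\omega$-limits of the extracted orbits) is precisely the content of the theorem, and neither your step two nor your step three closes it. In step two, since the endpoints of the curves $\gamma_k$ are fixed, Arzel\`a--Ascoli with fixed initial point only yields a forward calibrated \emph{ray} from $p_i$, not an element of $\m{\widetilde N}_L([\omega])$ with an $\alpha$-limit set; to get full orbits one must recenter in time, and after recentering all control of the endpoints is lost. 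What is actually needed (and what makes the proof in \cite{Contreras_Paternain:2002a} work) is a quantitative lemma of the following type: curves whose $(L-\omega+\alpha([\omega]))$-action exceeds the Peierls barrier by $o(1)$ spend a uniformly bounded total time outside any fixed neighborhood $\bigcup_m U_m$ of the projected Aubry set, and once they enter a sufficiently small neighborhood of a class they shadow it for a time going to infinity. With that, each extracted orbit has its entire backward tail in a single $U_m$ and forward tail in a single $U_{m'}$, which pins $\alpha$- and $\omega$-limits to $\Lambda_m$ and $\Lambda_{m'}$, and the itinerary of classes visited by $\gamma_k$ (which starts at $\Lambda_i$ and ends at $\Lambda_j$) directly produces a directed path from $\Lambda_i$ to $\Lambda_j$. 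In particular no condensation-of-the-graph or contradiction argument is needed once this is done.

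Step three, which is your substitute for the missing control, does not work as stated, for two reasons. First, it leans on ``chain transitivity of the flow on $\m{\widetilde N}_L([\omega])$''. This is not among the available results --- the paper and \cite{Contreras_Delgado_Iturriaga:1997} provide only chain \emph{recurrence} of the Aubry set (Theorem~\ref{T-rec-estat}) --- and it cannot be taken as known, because it is essentially equivalent to the statement being proved: in the hypothetical situation you want to exclude (a terminal component $S$ of classes with no outgoing connecting orbits), the Ma\~n\'e set is exactly \emph{not} chain transitive, since every point of $\m{\widetilde N}_L([\omega])$ near $S$ either lies in $S$ or lies on an orbit forward-asymptotic to $S$, so any $(\epsilon,T)$-chain starting in $S$ with $\epsilon$ smaller than the separation of the classes remains near $S$ forever. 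Invoking chain transitivity here is therefore circular. Second, the alternative contradiction --- ``the barrier between a point of $S$ and a point outside $S$ can be strictly lowered, contradicting the definition of $h_\omega$'' --- has no force: on a compact manifold $h_\omega$ is finite (indeed Lipschitz) between \emph{any} two points, whether or not connecting orbits exist, so finiteness or smallness of the barrier across $S$ contradicts nothing by itself; only the extraction lemma above converts near-minimizers into actual orbits of $\m{\widetilde N}_L([\omega])$. In short: your architecture is the right one, but the proposal defers the unique genuinely hard point and then attempts to discharge it with a fact that is either unavailable or equivalent to the theorem itself.
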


Let $ \Gamma \subset T\T $ be an invariant subset. Given $
\epsilon
> 0$ and $ T >0 $, we say that two points $ \theta_1, \theta_2
\in  \Gamma $ are  {\it $ (\epsilon,T)$-connected by chain} in
$\Gamma$, if there is a finite sequence $ \{ (\xi_i, t_i)
\}_{i=1}^n \subset \Gamma \times \R $, such that $ \xi_1=
\theta_1 $, \ $ \xi_n = \theta_2 $, \ $ T< t_i $ and $ \mbox{dist}(
\phi_{t_i}(\xi_1), \xi_{i+1} ) < \epsilon$, for $ i = 1,..., n-1$.
We say that the subset $ \Gamma $ is {\it chain
transitive}, if for all  $ \theta_1, \theta_2 \in \Gamma $ and
for all $ \epsilon >0 $ and $ T>0 $, the points $ \theta_1 $ and $
\theta_2 $ are $(\epsilon,T)$-connected by chain in $\Gamma $.
When this condition holds for $ \theta_1=\theta_2 $, we say that $
\Gamma$ is {\it chain recurrent}. The proof of the following
properties  can be seen  in \cite{Contreras_Delgado_Iturriaga:1997}.
\begin{thm}
\label{T-rec-estat}\
 $\m{\widetilde A}_L([\omega])  $ is chain recurrent.
\end{thm}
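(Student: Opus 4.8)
The plan is to fix $\theta\in\m{\widetilde A}_L([\omega])$ with base point $x=\pi(\theta)\in\m A_L([\omega])$ and numbers $\epsilon,T>0$, and to produce an $(\epsilon,T)$-chain from $\theta$ to itself all of whose nodes lie in $\m{\widetilde A}_L([\omega])$. The engine is the defining identity $h_\omega(x,x)=0$ together with the graph property. Recall that, by the extension of the Graph Theorem to the Aubry set, $\pi$ restricts to a bi-Lipschitz homeomorphism between $\m{\widetilde A}_L([\omega])$ and $\m A_L([\omega])$; consequently two points of the Aubry set that are close in the base are close in $T\T$, and $\theta$ is the \emph{unique} point of $\m{\widetilde A}_L([\omega])$ over $x$. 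This is what will allow us to control velocities while only bookkeeping base points. First I would extract the building curves: since $\Phi_\omega(x,x,t)+\alpha([\omega])t\ge 0$ for all $t>0$ and $\liminf_{t\to\infty}[\Phi_\omega(x,x,t)+\alpha([\omega])t]=h_\omega(x,x)=0$, for every $\eta>0$ there are $t_\eta>T$ and a Tonelli minimizer $\gamma:[0,t_\eta]\to\T$ with $\gamma(0)=\gamma(t_\eta)=x$ whose action defect $\int_0^{t_\eta}[L(\gamma,\dot\gamma)-\omega(\dot\gamma)]\,ds+\alpha([\omega])t_\eta$ is smaller than $\eta$. Being a minimizer, $\gamma$ solves the Euler--Lagrange equation, so its lift $s\mapsto(\gamma(s),\dot\gamma(s))$ is a genuine orbit segment of $\phi_t$ on the compact level $E_L^{-1}(\alpha([\omega]))$.

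The heart of the matter is the following shadowing statement, which I will call the \textbf{Key Lemma}: for every $\rho>0$ there are $\eta>0$ and $\tau>0$ such that any almost closed, almost minimizing loop as above (defect $<\eta$, based at $x$) has no subinterval of length $\tau$ on which its lift stays at distance $\ge\rho$ from $\m{\widetilde A}_L([\omega])$. The point is that a loop wandering a definite distance from the Aubry set for a definite time must pay a definite amount of action. If the Key Lemma failed, one could fix $\rho$, take $\eta_n\to 0$, loops $\gamma_n$ based at $x$ with defect $<\eta_n$, and for each a subinterval of length $\tau$ on which the lift stays $\rho$-far from $\m{\widetilde A}_L([\omega])$; passing to a limit (reparametrizing so the bad window sits at the origin and using compactness of minimizers of bounded action on the fixed compact energy level) one obtains a semi-static curve of zero defect whose lift remains at distance $\ge\rho$ from $\m{\widetilde A}_L([\omega])$ on an interval of length $\tau$. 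On the other hand, the small-defect, based-at-$x$ hypothesis prevents the loops from reaching any static class other than that of $\theta$ — visiting a distinct class through a point $y$ would force a cost at least $\Phi_\omega(x,y)+\Phi_\omega(y,x)>0$ — so both limit sets of the limit curve lie in the static class of $\theta$; such a semi-static curve is static, hence contained in $\m{\widetilde A}_L([\omega])$, a contradiction. I expect this compactness-plus-positivity argument to be the main obstacle and the step requiring the most care.

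Granting the Key Lemma, the chain is built as follows. Fix $\rho$ (to be adjusted at the end), take the corresponding $\eta,\tau$, and enlarge $T$ so that $T\ge\tau$. Choose a loop $\gamma$ with defect $<\eta$ and $t_\gamma>T$, and select subdivision times $0=s_0<s_1<\dots<s_k=t_\gamma$ with $s_{j+1}-s_j\in[T,T+\tau]$ and each lift point $(\gamma(s_j),\dot\gamma(s_j))$ within $\rho$ of $\m{\widetilde A}_L([\omega])$; this is possible because, by the Key Lemma, every window of length $\tau$ contains such a good time, so one may pick $s_{j+1}$ inductively inside $[s_j+T,\,s_j+T+\tau]$. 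Let $\xi_j\in\m{\widetilde A}_L([\omega])$ be an Aubry point $\rho$-close to the lift at $s_j$, with $\xi_0=\theta$ (legitimate, since the lift at $s_0$ projects to $x$ and $\theta$ is the unique Aubry point over $x$). Setting $t_j=s_{j+1}-s_j\in[T,T+\tau]$, the gaps lie in a compact interval and $E_L^{-1}(\alpha([\omega]))$ is compact, so $\phi$ is uniformly continuous there; hence $\phi_{t_j}(\xi_j)$ and $\phi_{t_j}(\gamma(s_j),\dot\gamma(s_j))=(\gamma(s_{j+1}),\dot\gamma(s_{j+1}))$ differ by an amount that tends to $0$ as $\rho\to 0$, uniformly in $t_j$, while the latter is within $\rho$ of $\xi_{j+1}$.

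Choosing $\rho$ small enough therefore makes every jump $\mathrm{dist}(\phi_{t_j}(\xi_j),\xi_{j+1})$ smaller than $\epsilon$, and the final step returns within $\epsilon$ of $\xi_0=\theta$. Thus $\{(\xi_j,t_j)\}_{j=0}^{k-1}$ is an $(\epsilon,T)$-chain from $\theta$ to $\theta$ contained in $\m{\widetilde A}_L([\omega])$. Since $\theta$, $\epsilon$ and $T$ were arbitrary, every point of $\m{\widetilde A}_L([\omega])$ is $(\epsilon,T)$-connected to itself in $\m{\widetilde A}_L([\omega])$, i.e. $\m{\widetilde A}_L([\omega])$ is chain recurrent.
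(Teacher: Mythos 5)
You should first know that the paper contains no proof of Theorem~\ref{T-rec-estat}: it is quoted verbatim from \cite{Contreras_Delgado_Iturriaga:1997}, so your attempt can only be measured against the standard argument there. Your architecture (almost-minimizing loops furnished by $h_\omega(x,x)=0$, a shadowing lemma saying such loops stay near $\m{\widetilde A}_L([\omega])$, then a chain built by jumping to nearby Aubry points) is indeed the standard one, but as written there is a genuine quantifier circularity that breaks the final step. Your Key Lemma produces, for each $\rho$, a pair $(\eta,\tau)$ with $\tau=\tau(\rho)$; you then enlarge $T$ so that $T\geq\tau(\rho)$ and take gaps $t_j\in[T,T+\tau(\rho)]$, so the uniform-continuity estimate you invoke is over a time window $[0,T+\tau(\rho)]$ whose length depends on $\rho$. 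The jump discrepancy is then of size roughly $\rho\, e^{\Lambda (T+\tau(\rho))}$, where $\Lambda$ is a Lipschitz constant for the flow, and since the Key Lemma gives no control on how fast $\tau(\rho)$ grows as $\rho\to 0$, ``choosing $\rho$ small enough'' at the end is not justified: shrinking $\rho$ lengthens the very time horizon over which you need the flow not to separate points. A second, related gap: you set $\xi_0=\theta$ and declare it $\rho$-close to the lift $(x,\dot\gamma(0))$ ``since $\theta$ is the unique Aubry point over $x$''; uniqueness of the Aubry point over $x$ says nothing about the velocity $\dot\gamma(0)$ of a near-minimizing loop being close to that of $\theta$, and neither $s_0=0$ nor $s_k=t_\gamma$ is known to be a good time in the sense of your Key Lemma, so both the first and the closing jumps are unjustified. (A smaller slip: the lift of a fixed-time Tonelli minimizer is an orbit segment, but not one lying on $E_L^{-1}(\alpha([\omega]))$; the compactness you need is Ma\~n\'e's a priori boundedness of velocities for minimizers of bounded action defect.)

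All of these gaps close if you prove the Key Lemma in the stronger, $\tau$-free form, which is what the literature proof effectively establishes: for every $\rho>0$ there is $\eta>0$ such that the \emph{entire} lift of any minimizing loop at $x$ with defect $<\eta$ and sufficiently long period lies within $\rho$ of $\m{\widetilde A}_L([\omega])$. The mechanism is already in your sketch: splitting the loop at any point $y=\gamma(s)$ gives $\inf_{t>0}\{\Phi_\omega(x,y,t)+\alpha([\omega])t\}+\inf_{t>0}\{\Phi_\omega(y,x,t)+\alpha([\omega])t\}\leq \eta$, so by continuity and compactness the whole projected loop lies in an arbitrarily small neighborhood of the static class of $x$; a limiting argument (if lifts could stay $\rho$-far along loops with defect tending to $0$, extract a semi-static limit curve, all of whose points lie in the static class of $x$, hence a static curve, whose velocity must then agree with the Lipschitz section given by the graph property) upgrades this to closeness in $T\T$ with no exceptional windows, and in particular covers the endpoints $s=0$ and $s=t_\gamma$. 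With that version there is no $\tau$: take gaps $t_j\in[T,T+1]$, fix the continuity modulus of the flow on $[0,T+1]$ once and for all, then choose $\rho$, then $\eta$; your chain construction goes through verbatim.
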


The following
theorem was   proved  by Ma\~n\' e in \cite{Mane:1996}. A proof can
be see also in \cite[theorem IV]{Contreras_Delgado_Iturriaga:1997}.
\begin{thm}
\label{T-min-estat} Let $ \mu\in \mathfrak B(L)$. 
 Then $ \mu \in \mathfrak M_L([\omega]) $ if only if
Supp$(\mu) \subset \m{\widetilde A}_L([\omega]) $.
\end{thm}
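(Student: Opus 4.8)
The plan is to prove the two implications separately, the content lying entirely in the reverse one.

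\emph{Necessity.} Suppose $\mu\in\mathfrak M_L([\omega])$. By the definition of the Mather set as $\m{\widetilde M}_L([\omega])=\overline{\bigcup_{\nu\in\mathfrak M_L([\omega])}\supp(\nu)}$, the support of the single minimizing measure $\mu$ already lies in $\m{\widetilde M}_L([\omega])$. Combining this with the inclusion $\m{\widetilde M}_L([\omega])\subseteq\m{\widetilde A}_L([\omega])$ recalled above (Mather's graph theorem for the Aubry set), we get $\supp(\mu)\subset\m{\widetilde A}_L([\omega])$ at once.

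\emph{Sufficiency.} Assume $\supp(\mu)\subset\m{\widetilde A}_L([\omega])$. Since $\m{\widetilde A}_L([\omega])\subseteq\m{\widetilde N}_L([\omega])$, every $\theta\in\supp(\mu)$ generates a semi-static orbit $\gamma(s)=\pi\circ\phi_s(\theta)$. First I would record the finite, continuous Ma\~n\'e potential $\Phi_\omega^*(x,y)=\inf_{t>0}\{\Phi_\omega(x,y,t)+\alpha([\omega])t\}$; finiteness holds because $\alpha([\omega])$ is the critical value of $L-\omega$, and continuity on the compact space $\T\times\T$ then gives a bound $|\Phi_\omega^*|\le C$. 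With this notation the semi-static condition defining $\m{\widetilde N}_L([\omega])$ reads, for every $T>0$,
\[\int_0^T\big(L-\omega\big)(\phi_s\theta)\,ds+\alpha([\omega])\,T=\Phi_\omega^*\big(\gamma(0),\gamma(T)\big).\]
Dividing by $T$ and using $|\Phi_\omega^*|\le C$, I obtain
\[\lim_{T\to\infty}\frac1T\int_0^T\big(L-\omega\big)(\phi_s\theta)\,ds=-\alpha([\omega])\qquad\text{for every }\theta\in\supp(\mu),\]
with no recurrence hypothesis required.

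It then remains to transfer this pointwise time average to the space average. Using the $\phi_t$-invariance of $\mu$ and Fubini's theorem, for each $T>0$ one has $\int_{T\T}(L-\omega)\,d\mu=\int_{T\T}\big[\frac1T\int_0^T(L-\omega)(\phi_s\theta)\,ds\big]\,d\mu(\theta)$. As $\supp(\mu)$ is compact and $L-\omega$ continuous, the inner time averages are uniformly bounded and, by the previous step, converge to the constant $-\alpha([\omega])$ on the full-measure set $\supp(\mu)$; dominated convergence then yields $\int_{T\T}(L-\omega)\,d\mu=-\alpha([\omega])$, which is precisely the condition defining $\mu\in\mathfrak M_L([\omega])$. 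The main obstacle is the preliminary step: establishing that the Ma\~n\'e potential $\Phi_\omega^*$ is finite and bounded and that the semi-static orbits satisfy the displayed action identity, since this is where the comparison against all competing curves is packaged; once that identity is available, the boundedness estimate and the invariance/dominated-convergence argument are routine.
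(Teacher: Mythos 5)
Your proof is correct, but note that the paper itself contains no proof of this statement: it is quoted from Ma\~n\'e \cite{Mane:1996} and from \cite[Theorem IV]{Contreras_Delgado_Iturriaga:1997}, so the relevant comparison is with the standard argument in those references, which your write-up essentially reconstructs. The necessity direction is, as you say, immediate from the definition of $\m{\widetilde M}_L([\omega])$ as the closure of the union of supports of minimizing measures together with Mather's inclusion $\m{\widetilde M}_L([\omega])\subseteq \m{\widetilde A}_L([\omega])$, which the paper records as a prior result, so no circularity arises. Your sufficiency argument --- the semi-static identity from the inclusion $\m{\widetilde A}_L([\omega])\subseteq\m{\widetilde N}_L([\omega])$, boundedness of the Ma\~n\'e potential $\Phi_\omega^*$, and the passage from time averages to the space average via invariance, Fubini and dominated convergence --- is the standard route; using dominated convergence pointwise on all of $\supp(\mu)$ is a clean variant that avoids the ergodic decomposition appearing in some write-ups. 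One refinement makes your flagged ``main obstacle'' essentially disappear: you only need the \emph{upper} bound $\Phi_\omega^*(x,y)\le C$ on $\T\times\T$, which is elementary (join $x$ to $y$ by any fixed-speed curve in time $1$ and use compactness of $\T$). Indeed, the inequality $\int (L-\omega)\,d\mu \ge -\alpha([\omega])$ holds automatically for every $\mu\in\mathfrak B(L)$ by the very definition of $\alpha$, so from your identity you only need $\int (L-\omega)\,d\mu + \alpha([\omega]) \le C/T \to 0$ to conclude equality. The lower bound on $\Phi_\omega^*$ --- equivalently, the nontrivial fact that $\alpha([\omega])$ is not smaller than the Ma\~n\'e critical value of $L-\omega$, where ``the comparison against all competing curves is packaged'' --- is thus not needed at all.
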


\smallskip

Finally, for any closed manifold $ M $, we say that  a class  $ h \in H_1(M,\R) $ is a
{\it rational homology   } if there is $ \lambda>0$ such that $ \lambda h \in
i_*H_1(M,\Z) $, where $ i:H_1(M,\Z)\hookrightarrow H_1(M,\R) $
denotes the inclusion. The following proposition was proved by D. Massart    in \cite{Massart:1997}. 
\begin{prop}
\label{p-h-rational} Let $ M $ be a closed and oriented  surface and let $L$ be a Tonelli Lagrangian on $M$.
Let $ \mu \in \mathfrak B(L) $ be a measure $ \rho(\mu)$-minimizing such
that $ \rho(\mu)$ is a rational homology. Then the support of $ \mu $ is a union of closed orbits or fixed points  of the Lagrangian flow.
\end{prop}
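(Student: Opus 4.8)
The plan is to combine Mather's Graph Theorem with the two--dimensional topology of the flow, so that the support is forced to be a non--crossing family of \emph{recurrent} orbits sharing a rational asymptotic direction. First I would pick a subderivative $[\omega]$ of $\beta$ at $h=\rho(\mu)$; then $\mu$ is $[\omega]$--minimizing, so by Theorem~\ref{T-min-estat} one has $\supp(\mu)\subset\m{\widetilde A}_L([\omega])\subset\m{\widetilde M}_L([\omega])$. The Graph Theorem says $\m{\widetilde M}_L([\omega])$ is a Lipschitz graph, hence $\pi$ is injective on it. Consequently the projected orbits $t\mapsto\pi(\phi_t(\theta))$, for $\theta\in\supp(\mu)$, are pairwise disjoint and individually non--self--crossing: they organize $K:=\pi(\supp(\mu))\subset M$ into a lamination by curves oriented by the flow.

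Next I would record the two structural facts that drive the argument. Since $\mu$ is $\phi_t$--invariant, $\mu$--almost every point is Poincar\'e recurrent, so $\supp(\mu)$ is contained in the closure of the recurrent set. On the other hand, the asymptotic (Schwartzman) cycle of $\mu$ is exactly $h=\rho(\mu)$, which is rational by hypothesis. Here the non--crossing property is crucial and lets me bypass the ergodic decomposition: two leaves of the lamination with \emph{different} homology directions would necessarily intersect transversally on the surface, contradicting injectivity of $\pi$ on the graph. Thus, away from the fixed points of the flow, all leaves have a common asymptotic direction, and this direction is forced to be that of the rational class $h$.

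To convert ``rational direction'' into ``closed orbit'' I would set up a Poincar\'e section. Choosing $\lambda>0$ with $\lambda h\in i_*H_1(M,\Z)$ and the dual circle--valued map $f\colon M\to S^1$ (on $\T$ this is simply the linear map in the direction complementary to $h$), the fibers $f^{-1}(\mathrm{pt})$ are transverse to the non--singular part of the lamination, and the first--return map $P$ is well defined on the intersection of such a fiber with $\supp(\mu)$. Orientability of $M$ together with the non--crossing property makes $P$ an orientation--preserving homeomorphism onto its image, i.e. \emph{monotone}, and the rationality of $h$ forces its rotation number to be rational. For a monotone circle map with rational rotation number the recurrent set coincides with the closed set $\mathrm{Fix}(P^{q})$ of periodic points; since $P$ carries the induced invariant measure, whose support lies in the recurrent set, every point of $\supp(\mu)$ lying off the fixed--point locus sits on a periodic orbit of $P$, that is, on a closed orbit of $\phi_t$. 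The orbits projecting into the fixed--point locus contribute fixed points, and together these give that $\supp(\mu)$ is a union of closed orbits and fixed points.

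The hard part, I expect, will be the construction of a genuine global transversal and the verification that $P$ is monotone: this is precisely the step where orientability of $M$ and the Graph Theorem are indispensable, and it is also the step most sensitive to the presence of fixed points of the flow, where no return map exists. The degenerate case $h=0$ (no asymptotic direction at all) also needs separate care; there I would instead invoke the Poincar\'e--Bendixson theorem for the non--crossing recurrent orbits, using that a vanishing asymptotic cycle excludes the minimal invariant sets carrying irrational rotation number, so that the only recurrent behavior left is closed orbits and fixed points.
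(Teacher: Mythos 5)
A preliminary remark: the paper contains no proof of Proposition~\ref{p-h-rational}; it is quoted from Massart \cite{Massart:1997} and used as a black box, so your proposal has to stand on its own --- and it has genuine gaps.

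The first gap is the claim that ``two leaves of the lamination with different homology directions would necessarily intersect transversally.'' The proposition is stated for an arbitrary closed oriented surface $M$, and on any surface of genus at least two this claim is false: disjoint simple closed curves can represent linearly independent homology classes (take the core curves of two different handles). This claim is exactly what you use to bypass the ergodic decomposition, i.e.\ to exclude ergodic components with irrational rotation classes, so once it goes the argument does not start. Even on $\T$, where disjoint essential simple closed curves do have proportional classes, your leaves are typically not closed curves, and upgrading ``pairwise disjoint, non-self-intersecting, recurrent'' to ``common rational asymptotic slope'' requires a lift-to-$\R^2$, Aubry--Mather-type argument, not the crossing remark. (A smaller slip in the same step: the inclusion $\m{\widetilde A}_L([\omega])\subset \m{\widetilde M}_L([\omega])$ is backwards; what you actually need, $\supp(\mu)\subset\m{\widetilde M}_L([\omega])$, holds by the definition of the Mather set, and the Aubry set has the graph property as well, so this one is harmless.)

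The second gap is the Poincar\'e section. Nothing guarantees that the fibers $f^{-1}(\mathrm{pt})$ of a circle-valued map dual to $h$ are transverse to the lamination; leaves may be tangent to, or even contained in, a fiber, so the return map $P$ is not defined without the very construction you defer, and its monotonicity does not follow from orientability alone. Moreover, ``$h$ rational $\Rightarrow$ rotation number of $P$ rational'' compares a time average ($\rho(\mu)$ is defined by integrating forms against $\mu$) with a per-return average, and this comparison needs bounded return times; precisely when $\supp(\mu)$ contains fixed points --- a case the statement explicitly allows --- return times are unbounded and the identification fails. Finally, $h=0$ is itself a rational class, and it cannot be dispatched by Poincar\'e--Bendixson: that theorem is false on surfaces of positive genus (including $\T$), and the assertion that a vanishing asymptotic cycle excludes nontrivially recurrent minimal sets is a Schwartzman/Katok-type theorem about surface flows, i.e.\ a restatement of what is to be proved rather than a remark. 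In outline (graph theorem plus surface topology plus rationality) your strategy points in the direction taken in the literature, but each decisive step is either false in the stated generality or begs the question.
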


\section{Proof of Theorem \ref{main-thm}}
\label{proofs}

In this section, we prove the Theorem \ref{main-thm}.  

Let $L:T\T\rightarrow \mathbb{R}$ and $c>c_0(L)$.
 We assume that the restricted flow
\[ \left. \phi_t \right|_{E_L^{-1}(c)} :  E_L^{-1}(c) \rightarrow E_L^{-1}(c) \]
satisfies the two conditions: 
 \begin{enumerate}
\item [(c1)]
all closed orbits  are hyperbolic or elliptic, and
\item [(c2)]
all heteroclinic intersections  are transverse.
\end{enumerate}

\begin{lem}
\label{lemmaA1} Let $c>c_0(L)$ and let $h_0 \in H_1(\T, \mathbb{R})\approx \R^2   $ a non-zero class. 
Then there are a invariant probability measure $ \mu_0 $ and a closed 1-form $ \omega_0$ with $ \alpha([\omega_0])=c $, such that:
\begin{itemize}
\item[(i)] $ \rho( \mu_o) = \lambda_0 h_0 $ for some $ \lambda_0 \in R$,
\item[(ii)] $\mu_0 \in \mathfrak M_L ([\omega_0])$ 
and therefore     $ \supp( \mu_0) \subset  E_L^{-1}(c)$.
\end{itemize} 
\end{lem}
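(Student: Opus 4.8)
The plan is to translate the problem into convex analysis on the dual pair $\alpha,\beta$ and then exploit the geometry of the sublevel sets of $\alpha$. Recall that $\alpha:H^1(\T,\R)\to\R$ is convex and superlinear with $\alpha^*=\beta$, and that $c_0(L)=\min\alpha$. Since $c>c_0(L)$, the sublevel set $K_c=\{[\omega]\in H^1(\T,\R):\alpha([\omega])\le c\}$ is a compact convex body with nonempty interior (it contains a minimizer of $\alpha$ in its interior), and its boundary is the level set $\{\alpha=c\}$. The goal is to produce a class $[\omega_0]$ on this boundary whose subdifferential $\partial\alpha([\omega_0])$ meets the open ray $\R_{>0}\,h_0$; the desired measure will then be a minimizing measure in that homology direction.

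First I would locate $[\omega_0]$. Consider the linear functional $[\omega]\mapsto\langle[\omega],h_0\rangle$ and let $[\omega_0]\in K_c$ be a point where it attains its maximum over the compact set $K_c$. Because $h_0\ne 0$ and $K_c$ has nonempty interior, this functional is nonconstant on $K_c$, so the maximum is attained on $\partial K_c$, giving $\alpha([\omega_0])=c$. At such a maximizer the vector $h_0$ lies in the outer normal cone to $K_c$ at $[\omega_0]$. Since $[\omega_0]$ is not a minimizer of $\alpha$ we have $0\notin\partial\alpha([\omega_0])$, and for a sublevel set of a convex function this normal cone equals the cone $\R_{\ge 0}\cdot\partial\alpha([\omega_0])$. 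Hence there is $\lambda_0>0$ with $\lambda_0 h_0\in\partial\alpha([\omega_0])$, which is precisely the statement that $[\omega_0]$ is a subderivative of $\beta$ at $\lambda_0 h_0$, i.e. $\alpha([\omega_0])+\beta(\lambda_0 h_0)=\langle[\omega_0],\lambda_0 h_0\rangle$.

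Next I would build the measure. Let $\mu_0$ be a $\lambda_0 h_0$-minimizing measure, so that $\rho(\mu_0)=\lambda_0 h_0$ and $\int_{T\T}L\,d\mu_0=\beta(\lambda_0 h_0)$; such a measure exists because the infimum defining $\beta$ is attained. Using $\int_{T\T}\omega_0\,d\mu_0=\langle[\omega_0],\rho(\mu_0)\rangle$ from \eqref{def rho} together with the Fenchel equality above, a one-line computation gives
\[\int_{T\T}(L-\omega_0)\,d\mu_0=\beta(\lambda_0 h_0)-\langle[\omega_0],\lambda_0 h_0\rangle=-\alpha([\omega_0]),\]
so $\mu_0\in\mathfrak M_L([\omega_0])$. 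This yields (i) and the first part of (ii), with $\alpha([\omega_0])=c$. Finally, by Carneiro's theorem $\m{\widetilde M}_L([\omega_0])\subseteq E_L^{-1}(\alpha([\omega_0]))=E_L^{-1}(c)$, and since $\supp(\mu_0)\subseteq\m{\widetilde M}_L([\omega_0])$ we conclude $\supp(\mu_0)\subset E_L^{-1}(c)$.

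I expect the main obstacle to be the convex-analytic step identifying the maximizer's normal direction with an element of $\partial\alpha([\omega_0])$ lying on the prescribed ray: one must allow for possible corners or flat pieces of the level set $\{\alpha=c\}$ (so subdifferentials, rather than gradients, are unavoidable) and must verify that the constraint is active, so that $\lambda_0>0$ and not $\lambda_0=0$. Everything else — the existence of the $\lambda_0 h_0$-minimizing measure and the energy localization of Mather measures — is standard Aubry–Mather theory already recalled above.
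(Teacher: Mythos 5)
Your proof is correct, but it follows a genuinely different route from the paper's. The paper works on the homology side: it forms the set $S(h_0)=\bigcup_{\lambda\in\R}\partial\beta(\lambda h_0)$ of all subderivatives of $\beta$ along the line $\R h_0$, observes that $\alpha=c_0(L)$ on $\partial\beta(0)$ while $\alpha|_{S(h_0)}$ is unbounded (both by superlinearity), and then applies an intermediate value argument to $\alpha|_{S(h_0)}$ to find $\omega_0\in\partial\beta(\lambda_0 h_0)$ with $\alpha([\omega_0])=c$; the measure $\mu_0$ is then a $(\lambda_0 h_0)$-minimizing measure, exactly as in your final step. You instead work on the cohomology side: you maximize $\langle\cdot,h_0\rangle$ over the sublevel set $K_c=\{\alpha\le c\}$ and use the normal-cone description of sublevel sets --- valid here because Slater's condition holds ($c>\min\alpha$) and because the cone $\R_{\ge0}\,\partial\alpha([\omega_0])$ is closed, $\partial\alpha([\omega_0])$ being compact and omitting $0$ --- to produce $\lambda_0>0$ with $\lambda_0 h_0\in\partial\alpha([\omega_0])$, and then dualize via the Fenchel equality. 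The two arguments are convex duals of one another and meet at the same identity $[\omega_0]\in\partial\beta(\lambda_0 h_0)$, after which both conclude identically (existence of the rotation-constrained minimizing measure, the one-line computation giving $\mu_0\in\mathfrak M_L([\omega_0])$, and Carneiro's theorem for the energy level). Your version has two concrete advantages: the maximizer lands on $\{\alpha=c\}$ automatically, so the level $c$ is hit exactly by construction rather than by an intermediate value theorem applied to a multivalued object; and it sidesteps the paper's assertion that $S(h_0)$ is convex, which is the weakest point of the paper's argument (what that argument actually needs, and what is true, is connectedness of $S(h_0)$, not convexity). As a bonus your construction gives the sign information $\lambda_0>0$, slightly stronger than the $\lambda_0\in\R$ demanded by the statement.
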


\begin{proof} Since $ \beta $ is superlinear, we have:
\begin{equation}
\label{eq-sub-lambda}
 \lim_{\lambda \rightarrow  \infty}
\frac{\beta(\lambda h_0)}{|\lambda h_0|}= \infty.
\end{equation}
 Let $  \partial \beta : H_1(\T,\R ) \rightarrow H_1(\T,\R )^* =H^1(\T,\R )$ be the multivalued function such that to each point $ h \in H_1(\T,\R ) $ associates  all subderivatives of  $ \beta $  in the  point $ h $.
This is well known that, since $ \beta $ is finite, then   $ \partial \beta(h) $ is  a  non empty  convex cone for all $ h \in H_1(\T,\R)$, and $ \partial \beta(h) $ is a unique vector if and only if $ \beta $ is differentiable in $ h $   (see for example \cite [Section 23]{Rockafellar:1970}).
We define  the subset
\[ S(h_0)= \bigcup_{\lambda \in \R} \partial \beta (\lambda h_0 ). \]
By (\ref{eq-sub-lambda}) we have that the subset $ S(h_0) \subset H^1(\T,\R) $ is not bounded. Since $ \beta $ is continuous, by the above properties of the multivalued function $ \partial \beta $, we have that $ S(h_0) $ is a convex subset. 
Observe that if $ \omega \in \partial \beta(0) $, then $\alpha([\omega]) = c_0(L) = \min \{\alpha([\delta])\ ;\ \ \delta \in H^1(\T,\R)\}  $ and, by superlinearity of  $ \alpha $, the restriction $ \alpha|_{S(h_0)} $ is not bounded. 
Therefore, by the  intermediate value theorem,  for each $ c \in [c_0(L), \infty) $ there is $ \omega_0 \in \partial \beta (\lambda_0h_0)\subset S(h_0) $, for some $ \lambda_0 \in \R $, such that $ \alpha([\omega_0]) = c $.
Therefore, if $\mu_0\in\mathfrak M(L)$ is a $(\lambda_0h_0)$-minimizing measure, then  $\mu_0 \in \mathfrak M_L ([\omega_0])$,
 and      $ \supp( \mu_0) \subset  E_L^{-1}(c)$ (by  \cite{Carneiro:1995}).
\end{proof}

\smallskip

Let $i:H_1(\T,\mathbb{Z}) \hookrightarrow H_1(\T, \mathbb{R})$ be the inclusion. Recall that $H^1(\T,\mathbb{Z})\approx\mathbb{Z}^2$ and that $H^1(\T,\mathbb{R})\approx\mathbb{R}^2$. Then $\{(0,1),\,(1,0)\}\subset H_1(\T, \mathbb{Z}) $ is a base of $H_1(\T, \mathbb{Z})$. We have that if $\alpha_0,\,\alpha_1$ are two closed curves in $\T$, with $[\alpha_0]=(0,1)$ and $[\alpha_1]=(1,0)$  then $\alpha_0\cap\alpha_1\neq \emptyset$.

We fix  $h_0=(0,1)\in H_1(\T,\mathbb{Z})$.  
By applying Lemma~\ref{lemmaA1}  we obtain a closed $1$-form $\omega_0$ and a $[\omega_0]$-minimizing measure $ \mu_0$ with  support in to the level $ E_L^{-1}(c)$, for which the  rotational vector   $ \rho( \mu_0)= \lambda_0h_0 $  is  a   rational homology class.
 Therefore, by Proposition \ref{p-h-rational}, the support of $\mu_0$ is formed by  the union of closed  orbits of the flow $  \left. \phi_t \right|_{E_L^{-1}(c)}$.

\begin{lem} \label{lem_uniao_finita}
The Mather's set $\m{\widetilde M}_L([\omega_0])$ is the  union of a finite number of closed orbits for the Lagrangian flow of $L$.
\end{lem}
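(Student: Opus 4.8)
The plan is to upgrade the already established fact that $\supp(\mu_0)$ is a union of closed orbits to a statement about the \emph{whole} Mather set, and then to extract finiteness from condition (c1). I would split the argument into three steps.

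\emph{Step 1 (every orbit of the Mather set is closed).} Recall that $\m{\widetilde M}_L([\omega_0])=\overline{\bigcup_{\nu\in\mathfrak M_L([\omega_0])}\supp(\nu)}$ and that $\mathfrak M_L([\omega_0])$ is a simplex generated by its ergodic elements. Each ergodic $\nu\in\mathfrak M_L([\omega_0])$ is $\rho(\nu)$-minimizing and satisfies $\rho(\nu)\in\partial\alpha([\omega_0])$, the convex set of homology classes exposed by $[\omega_0]$, which contains $\rho(\mu_0)=\lambda_0 h_0$. Since $h_0=(0,1)\in i_*H_1(\T,\Z)$, every real multiple $t\,h_0$ is a rational class; hence it would suffice to know $\partial\alpha([\omega_0])\subset\R h_0$. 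I would try to guarantee this already in Lemma~\ref{lemmaA1}, by selecting $\omega_0$ in the relative interior of the subderivative cone $\partial\beta(\lambda_0 h_0)$, so that $[\omega_0]$ exposes only the radial face. Granting $\partial\alpha([\omega_0])\subset\R h_0$, Proposition~\ref{p-h-rational} applies to each ergodic $\nu$ and gives that $\supp(\nu)$ is a union of closed orbits; therefore $\m{\widetilde M}_L([\omega_0])$ is a union of closed orbits, all lying in $E_L^{-1}(c)$.

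\emph{Step 2 (each such orbit is isolated).} Let $\Gamma\subset\m{\widetilde M}_L([\omega_0])$ be one of these closed orbits, and let $P$ be the Poincar\'e return map on a local section transverse to the flow at a point of $\Gamma$. By (c1) the corresponding fixed point is hyperbolic or elliptic, so $1$ is not an eigenvalue of $dP$; hence $dP-\mathrm{Id}$ is invertible and, by the implicit function theorem applied to $P(x)=x$, the fixed point — and thus $\Gamma$ — is isolated among closed orbits of the flow in $E_L^{-1}(c)$. \emph{Step 3 (finiteness).} The set $\m{\widetilde M}_L([\omega_0])$ is closed in the compact level $E_L^{-1}(c)$, hence compact, and by Steps 1--2 it is a union of pairwise disjoint, isolated closed orbits. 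If this union were infinite, a sequence of distinct orbits would accumulate at a point $p\in\m{\widetilde M}_L([\omega_0])$; as $p$ lies on some closed orbit $\Gamma_\infty$ of the union, infinitely many distinct closed orbits would enter every neighborhood of $\Gamma_\infty$, contradicting its isolation. Hence the union is finite, proving the lemma.

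I expect the main obstacle to be Step 1, that is, controlling the whole simplex $\mathfrak M_L([\omega_0])$ rather than the single measure $\mu_0$. The danger is an $[\omega_0]$-minimizing measure with irrational rotation vector, whose support (an invariant circle or an Aubry--Mather Cantor set) is not a union of closed orbits and is not directly excluded by (c1). This is precisely where the rationality of $h_0$ and the freedom in the choice of $\omega_0$ inside $\partial\beta(\lambda_0 h_0)$ must be exploited to keep the exposed face $\partial\alpha([\omega_0])$ radial, so that every minimizing rotation vector remains rational; if instead $\beta$ is differentiable at $\lambda_0 h_0$ and $\partial\alpha([\omega_0])$ is a nondegenerate flat, one would additionally need a no-crossing/graph argument to rule out such irrational minimizing sets. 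Once Step 1 is secured, Steps 2--3 follow routinely from nondegeneracy and compactness.
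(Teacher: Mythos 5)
There are two genuine gaps in your proposal, and both are exactly the points where the paper has to do real work.

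\emph{Gap 1 (Step 1).} Your reduction rests on the claim that one can choose $\omega_0$ inside $\partial\beta(\lambda_0 h_0)$ so that the exposed face $\partial\alpha([\omega_0])$ is radial, i.e.\ contained in $\R h_0$. Convex analysis cannot deliver this: choosing $[\omega_0]\in \mathrm{ri}\,(\partial\beta(\lambda_0 h_0))$ only makes $\partial\alpha([\omega_0])$ the \emph{smallest} exposed face of $\beta$ containing $\lambda_0 h_0$, and nothing prevents that smallest face from being a nonradial segment or a two-dimensional flat. Worse, in the very scenario you flag as dangerous ($\beta$ differentiable at $\lambda_0 h_0$ with a nondegenerate flat through it), $\partial\beta(\lambda_0 h_0)$ is a \emph{singleton}, so there is no freedom of choice at all; and it is also unclear that the constraint $\alpha([\omega_0])=c$ from Lemma~\ref{lemmaA1} is compatible with picking $\omega_0$ in a relative interior. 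So the ``no-crossing/graph argument'' you defer to is not an optional refinement: it is the only route, and it is what the paper does. Concretely: by Proposition~\ref{p-h-rational}, $\supp(\mu_0)$ contains a closed orbit projecting (via the Graph Theorem) to a \emph{simple} closed curve $\gamma$ with $[\gamma]=n_0h_0$; cutting $\T$ along $\gamma$ yields an open cylinder $C_\gamma$, and the graph property forces the projected support of every other measure in $\mathfrak M_L([\omega_0])$ into $C_\gamma$, whence $\rho(\mu)\in i_*(H_1(C_\gamma,\R))=\langle h_0\rangle_\R$. Only then can Massart's proposition be applied to every minimizing measure. Your proposal names the obstacle but does not overcome it.

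\emph{Gap 2 (Steps 2--3).} The isolation claim in Step 2 is false as stated. Invertibility of $dP-\mathrm{Id}$ gives, via the implicit function theorem, isolation of the fixed point \emph{among fixed points of the first-return map} $P$, i.e.\ among closed orbits of nearby period. It does \emph{not} give isolation among all closed orbits: periodic points of high iterates $P^n$ can accumulate on a nondegenerate fixed point. Indeed a hyperbolic closed orbit with a transverse homoclinic point --- precisely the object this paper is trying to produce --- is accumulated by closed orbits of arbitrarily long period, so Step 3's contradiction evaporates unless the accumulating orbits have uniformly bounded periods. The paper supplies this bound: since $c>c_0(L)$, compactness of $\mathfrak M_L([\omega_0])$ and continuity of $\rho$ give $0<k\leq|\rho(\mu)|\leq l$ on $\mathfrak M_L([\omega_0])$; since each closed orbit in the Mather set projects to a simple closed curve whose class is a (primitive) multiple of $h_0$, its ergodic measure satisfies $\rho(\mu_\gamma)=\pm h_0/T$, so $T\leq |h_0|/k$ uniformly. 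With bounded periods, an infinite family of such orbits would accumulate (by compactness of $E_L^{-1}(c)$) on a closed orbit of comparable period, producing nearby fixed points of its first-return map and contradicting (c1). Your argument skips both the rotation-vector bound and the period bound, so Steps 2--3 do not close.
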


\begin{proof} 
 The Mather's Graph Theorem asserts that the map $\left. \pi\right|_{\m{\widetilde M}_L([\omega_0])}:\m{\widetilde M}_L([\omega_0])\to \T $ is   injective. 
 Hence, if $\theta_1,\,\theta_2:\mathbb{R}\rightarrow \T$ are two distinct closed orbits of $ \phi_t$ contained in $\m{\widetilde M}_L([\omega_0])$,
  then $\gamma_1(t) = \pi \circ \theta_1(t)$ and $\gamma_2(t) = \pi \circ \theta_2(t)$ must be simple closed curves and $[\gamma_1]=n[\gamma_2]\in H_1(\T, \mathbb{Z})$, because otherwise $\gamma_1\cap\gamma_2\neq\emptyset$. 
 
 Since $c=\alpha([\omega_0])>c_0(L)=-\beta(0)$ and $\mathfrak M_L([\omega_0])$ is a compact set, the continuity of the map $ \rho:\mathfrak{B}(L) \to H_1(\T,\R) $  ( c.f. \cite{Mather:1991}) implies that  there are constants  $k,l\in \R$ such that 
\begin{eqnarray}\label{cotrho}
0<k \leq |\rho(\mu)|  \leq l ,\,\qquad\mbox{for all }\mu\in \mathfrak M_L([\omega_0]).
\end{eqnarray}

 By definition of $\m{\widetilde M}_L([\omega_0])$, we have that $\supp(\mu_0)\subset \m{\widetilde M}_L([\omega_0])$.
Let $ \mu_{\gamma}$ be    the ergodic measure 
supported in a closed orbit $\theta(t)=(\gamma(t), \dot \gamma(t)) \subset \supp(\mu_0) $. Since that $\rho(\mu_0)=\lambda_0h_0$ 
and by the linearity of the map 
$ \rho$, we have
that $[\gamma]=n_0h_0$ for some $0\neq n_0\in\mathbb{Z}$.
It follows from \eqref{def rho}, that 
\[ \langle \rho(\mu_\gamma),[\omega]\rangle  = \lim_{t \to \infty}\frac{1}{t} \int_0^t  \omega(\theta(s))\
ds = \frac{1}{|n_0|T} \int_0^{|n_0|T}  \omega(\theta(s)) \ ds = \frac{1}{|n_0|T} \int_\gamma \omega,\]
 for any closed 1-form $\omega $ on $ \T$. Then
\begin{equation}
\label{rho of ergodic}
\rho(\mu_{\gamma})= \frac{[\gamma]}{|n_0|T}= \pm\  \frac{h_0}{T} ,
\end{equation}
where $T>0$ denotes the minimal period of $\gamma$.
Therefore, the period of any  periodic orbit contained in $\supp(\mu_0)$ is bounded. 

We fix a simple closed orbit 
 $\gamma\in\pi(Supp(\mu_0))$. Since  that $[\gamma]=n_0h_0$ we have that the set $C_\gamma=\T-\{\gamma\}$ define an open cylinder. Let $\mu\neq\mu_0$ be  contained in  $ \mathfrak M_L([\omega_0])$.
 The graph property implies that $\supp(\mu)\cap \supp(\mu_0)=\emptyset$. 
Then $\pi(Supp(\mu))\subset C_\gamma$ and  $\rho(\mu)\in i_*(H_1(C_\gamma,\mathbb{R}))\subset H_1(\T,\mathbb{R})$. Therefore, we have that 
\[\mathfrak M_L([\omega_0])\subset\{\mu\in\mathfrak  M(L)\ :\ \rho(\mu)\in\langle h_0 \rangle_\R)\}.\] 

Applying the Proposition~\ref{p-h-rational} in each measure of the set $\mathfrak M_L([\omega_0])$, 
 we conclude  that the set $ \m{\widetilde M}_L([\omega_0])$ is a union of  closed orbits for the Lagrangian flow. Therefore,  
 the same arguments used on the ergodic components of $ \mu_0$ imply that  each ergodic measure in $\m{\widetilde M}_L([\omega_0])$  satisfies the equality \eqref{rho of ergodic}.
  Then  the inequality (\ref{cotrho}) implies that  the period of all periodic orbit  in $ \m{\widetilde M}_L([\omega_0])$ is uniformly bounded. 
  Therefore, it follows from   the compactness of $ E_L^{-1}(c)$ and  condition (c1)  that there is at most a finite number of closed orbits of $\left. \phi_t \right|_{E_L^{-1}(c)}$ in the Mather's set $ \m{\widetilde M}_L([\omega_0])$.

\end{proof}

\begin{rem} 
\label{rm hyperbolic}  It is  well known that action minimizing curves do not contain {\it conjugate points}~\footnote{ Two points $ \theta_1\not=\theta_2$ are said to be conjugate if $ \theta_2= \phi_\tau(\theta_2)$ and $V(\theta_2)\cap d_{\theta_1}\phi_\tau(V(\theta_1)) \not= \{0\}$, where $ V(\theta)= \ker d_\theta \pi $ denotes the vertical bundle .} 
and a proof of this fact can be see in \cite[\S 4]{Contreras_Iturriaga:1999}. 
So,  by Proposition A in \cite{Contreras_Iturriaga:1999}, for each  $ \theta \in  \m{\widetilde M}_L([\omega_0])$  there exists the Green bundle along of $\phi_t(\theta)$,
 i.e. there is a $ \phi_t$-invariant bi-dimensional subspace  $ \mathbb{E}(\phi_t(\theta)) \subset T_{\phi_t(\theta)} E^{-1}(c) \cong \R^3 $, for all $t \in \R$.    
  Therefore the linearized Poincar\'e   map on  $ \theta $ has an  invariant one-dimensional subspace, so the periodic orbit $ \phi_t(\theta)$  can not be elliptic.
\end{rem}

\smallskip

By Lemma \ref{lem_uniao_finita},  let $\gamma_i:\mathbb{R}\rightarrow\T$, with $i=1,\cdots,n$, be a closed curves  such that 
\[ \m M_L([\omega_0])  =\displaystyle\bigcup_{i=1}^n \gamma_i .\] 
Since $\supp(\mu_0)\subset \m{\widetilde M}_L([\omega_0])$, we have that $[\gamma_i]=n_0h_0=(0,n_0)\in H_1(\T,\mathbb{Z})$, for all $i\in\{1,\cdots,n\}$.


  Let $\m{\widetilde A}_L([\omega_0]) $ the Aubry set corresponding to the cohomology class $ [\omega_0] $  and let ${\bf \Lambda}_L([\omega_0])$ be the set of all static classes. We recall that 
  \[ \m{\widetilde M}_L([\omega_0]) \subseteq  \m{\widetilde A}_L([\omega_0]) .\]

We have that  either 
 $ \m{\widetilde M}_L([\omega_0]) \neq  \m{\widetilde A}_L([\omega_0])$ or $ \m{\widetilde M}_L([\omega_0]) =  \m{\widetilde A}_L([\omega_0])$.

\smallskip

If $ \m{\widetilde M}_L([\omega_0]) \neq  \m{\widetilde A }_L([\omega_0])$, 
for each $\theta \in   \m{\widetilde A}_L([\omega_0])\backslash  \m{\widetilde M}_L([\omega_0])$, 
by the graph property of the Aubry set $\m{\widetilde A}_L([\omega_0])$, 
the curve  $\gamma_{\theta}:=\pi\circ\phi_t(\theta):\mathbb{R}\rightarrow \T $ has no self-intersection points and  $\gamma_{\theta}\cap \m M_L([\omega_0])=\emptyset $.
 Moreover, by Theorem~\ref{T-min-estat}, we have that the $\alpha$-limit and $\omega$-limit sets of $\theta$ are contained in the Mather's set $\m{\widetilde M}_L([\omega_0])$. Since a curve on $\T$, 
 that accumulates in positive time to more than one closed curve, must have self-intersection points, 
 we have that $\omega(\theta)$ is a single closed orbit. 
  By the same arguments, we have that $\alpha(\theta)$ is a single closed orbit. 
 By Remark~\ref{rm hyperbolic} and condition (c2), the orbit $\phi_t(\theta)$
 is in the  transverse  intersections  of a unstable manifold with  a stable manifold of hyperbolic closed orbits, i.e., $\phi_t(\theta)$ is a transverse heteroclinic orbit.
Certainly, if $ \m{\widetilde M}_L([\omega_0])$ is an unique closed orbit, then $\phi_t(\theta)$ is a transverse homoclinic orbit, 
which implies $h_{top}(L,c)>0$ (see for example \cite[p. 276]{Katok_Hasselblatt:1995}). If the set  $\m{\widetilde M}_L([\omega_0]$ has more than one orbit, 
it follows from the recurrence property (Theorem \ref{T-rec-estat}), that $\theta$ is an $(\epsilon,T)$-chain connected in $\m{\widetilde A}_L([\omega_0])$ for all $\epsilon>0$ and $T>0$,
  i.e., there is a finite sequence $\{(\zeta_i,t_i)\}_{i=1}^k\subset \m{\widetilde A}_L([\omega_0]) \times\R$, such that $\zeta_1=\zeta_k=\theta\,,T<t_i$ and $\mbox{dist}(\phi_{t_i}(\zeta_1),\zeta_{i+1})<\epsilon$, for $i=1,\cdots,k-1$. 
  Since the closed orbits in $\m{\widetilde M}_L([\omega_0])$ are isolated on the torus, 
  we have that for $\epsilon$ small enough, the set $\{\pi(\zeta_i)\}_{i=1}^k\subset {\m A_L}([\omega_0])$ must intersect the interior of each one of the cylinders obtained by cutting the torus along the two curves $\gamma_i,\gamma_j \in {\m A_L}([\omega_0])$, with $1\leq i,\,j\leq n$. 
  Therefore, choosing an orientation on ${\m A_L}([\omega_0])$ and reordering the indexes, we obtain a cycle of transverse heteroclinic orbits.  This implies that $h_{top}(L,c)>0$
  
\bigskip
  
  Now we consider the case of $ \m{\widetilde M}_L([\omega_0]) =  \m{\widetilde A}_L([\omega_0])$.
 Since each static class is a connected set (Proposition 3.4 in \cite{Contreras_Paternain:2002a}), for each $1\leq i\leq n$, 
   we have that  $\Lambda_i=(\gamma_i,\dot{\gamma}_i)$. 
   Hence the number of static classes is  equal to  $n$. 
   
Initially, let us to assume that $\m{\widetilde A}_L([\omega_0]  $ has at least two static class, i.e. $ n\geq 2$.  
Let $\Lambda_1$ and $\Lambda_2 $  be two  distinct static classes.
 Applying the Theorem \ref{T-conect-stat}, we have that 
  $\Lambda_1\preceq\Lambda_2$,
 therefore  there exist classes $\Lambda_1,\dots,\Lambda_j=\Lambda_2$ and  points $\theta_1,\dots, \theta_{j-1} \  \in \m{\widetilde N}_L([\omega_0])$ such that, for all $ 1 \leq i \leq j-1 $,  the $\alpha$-limit set $\alpha(\theta_i)\subset\Lambda_i$ and the $\omega$-limit set $\omega(\theta_i)\subset\Lambda_{i+1}$. Also, we have that $\Lambda_2\preceq\Lambda_1$,then  there exist  $\Lambda_{j+1},\dots,\Lambda_{j+k}=\Lambda_1$
 and  points $\theta_{j+1},\dots, \theta_{j+k-1} \  \in \m{\widetilde N}_L([\omega_0])$ such that, for all $ j+1 \leq i \leq j+k-1 $, we have that  $\alpha(\theta_i)\subset\Lambda_i$ and $\omega(\theta_i)\subset\Lambda_{i+1}$.
  By  Remark~\ref{rm hyperbolic}, the corresponding orbits  $(\gamma_i,\dot{\gamma}_i)=\Lambda_i $  for $ 1 \leq i \leq j+k $  are  hyperbolic closed orbits of the flow $\left. \phi_t\right|_{E_{L}^{-1}(c)}$,  and since that the flow $ \left.\phi_t \right|_{E^{-1}(c)}$ satisfies the condition (c2), we have that for all  $ 1 \leq i \leq j+k $ the orbits $\phi_t(\theta_i) $ are in the  transverse heteroclinic intersections of the unstable manifold of $\Lambda_i$ and the stable manifold of $\Lambda_{i+1}$. Then, we obtain a cycle of transverse heteroclinic  orbits,    in particular we have that $h_{top}(L,c)>0$.

\smallskip

Let us to assume now that $ \m{\widetilde A}_L([\omega_0])$  contains  only one  static class $ \Lambda_1=(\gamma_1,\dot\gamma_1)$. In this case,  we can apply the Theorem B in \cite{Contreras_Paternain:2002a}, it   implies that the stable and unstable manifolds of the  hyperbolic closed orbit $ \Lambda_1$ have transverse homoclinic intersections. Here we will give a proof of this  result   in our particular setting.

Let $\Gamma= 2\Z \times \Z $ be a sublattice  of $ \Z^2 \subset \R^2$. We denote by $ \overline{\T} $ the torus defined using the sublattice $ \Gamma$, it is $\overline{\T}= \R^2 / \Gamma$. Let $p: \overline{\T} \to \T$ be the canonical projection. 
 Note that  $p: \overline{\T} \to \T $ is a  double-covering of $ \T$. Let $ \overline{L}: \overline{\T}\to \R $ be the lift of the Lagrangian $ L $ to 
 $\overline{\T}$.  It is well now that $ c_0 (\overline{L})= c_0(L)$ and we have that
 \[ \m A_{\overline{L}}([\omega_0])= p^{-1}(\m A_L([\omega_0]))= p^{-1}(\Lambda_1)\]   
(cf.  \cite[Lemma 2.3]{Contreras_Paternain:2002a}).  By the construction of the covering $p: \overline{\T} \to \T$, the set $ p^{-1}(\Lambda_1) \subset  \overline{\T}$ has two connected components. So, the Aubry set     $\m{\widetilde A}_{\overline{L}}([\omega_0])$ has two static classes and,  as in the above case, there is  a heteroclinic orbit connecting these classes. This heteroclinic connection is  projected in a transverse homoclinic orbit of the hyperbolic orbit $ \Lambda_1=(\gamma_1,\dot \gamma_1)\subset \T$. It completes the proof.     

We note that the homoclinic orbit that we obtained is not in the Ma\~n\'e set ${\m{ \widetilde N}_L}([\omega_0])$, however it is a projection of a orbit in the Ma\~n\'e set for the corresponding  lifted  Lagrangian 
 to the covering $p: \overline{\T} \to \T$.

\section*{Acknowledgments}
We are grateful to Prof. Alexandre Rocha for the helpful conversations.

\nolinenumbers

\bibliographystyle{amsalpha}

\begin{thebibliography}{Ma{\~n}96}

\bibitem[Car95]{Carneiro:1995}
M.~J.~Dias Carneiro, \emph{On minimizing measures of the action of autonomous
  {L}agrangians}, Nonlinearity \textbf{8} (1995), no.~6, 1077--1085.
  \MR{1363400 (96j:58062)}

\bibitem[CDI97]{Contreras_Delgado_Iturriaga:1997}
G.~Contreras, J.~Delgado, and R.~Iturriaga, \emph{Lagrangian flows: the
  dynamics of globally minimizing orbits. {II}}, Bol. Soc. Brasil. Mat. (N.S.)
  \textbf{28} (1997), no.~2, 155--196. \MR{1479500 (98i:58093)}

\bibitem[CI99a]{Contreras_Iturriaga:1999}
G.~Contreras and R.~Iturriaga, \emph{Convex {H}amiltonians without conjugate
  points}, Ergodic Theory Dynam. Systems \textbf{19} (1999), no.~4, 901--952.
  \MR{1709426 (2000h:37102)}

\bibitem[CI99b]{Contreras_Iturriaga_Coloquio:1999a}
\bysame, \emph{Global minimizers of autonomous {L}agrangians}, 22$^{\rm o}$
  Col{\'o}quio Brasileiro de Matem{\'a}tica. [22nd Brazilian Mathematics
  Colloquium], Instituto de Matem{\'a}tica Pura e Aplicada (IMPA), Rio de
  Janeiro, 1999. \MR{1720372 (2001j:37113)}


\bibitem[CP02]{Contreras_Paternain:2002a}
G.~Contreras and G.~Paternain, \emph{Connecting orbits between static classes
  for generic {L}agrangian systems}, Topology \textbf{41} (2002), no.~4,
  645--666. \MR{1905833 (2003i:37059)}

\bibitem[Kat73]{Katok:1973}
A.~B. Katok, \emph{Ergodic perturbations of degenerate integrable {H}amiltonian
  systems}, Izv. Akad. Nauk SSSR Ser. Mat. \textbf{37} (1973), 539--576.
  \MR{0331425 (48 \#9758)}

\bibitem[KH95]{Katok_Hasselblatt:1995}
A.~Katok and B.~Hasselblatt, \emph{Introduction to the modern theory of
  dynamical systems}, Encyclopedia of Mathematics and its Applications,
  vol.~54, Cambridge University Press, Cambridge, 1995, With a supplementary
  chapter by Katok and Leonardo Mendoza. \MR{1326374}

\bibitem[Ma{\~n}96]{Mane:1996}
R.~Ma{\~n}{{\'e}}, \emph{Lagrangian flows: the dynamics of globally minimizing
  orbits}, International {C}onference on {D}ynamical {S}ystems ({M}ontevideo,
  1995), Pitman Res. Notes Math. Ser., vol. 362, Longman, Harlow, 1996,
  pp.~120--131. \MR{1460800 (98g:58059)}

\bibitem[Mas97]{Massart:1997}
D.~Massart, \emph{Stable norms of surfaces: local structure of the unit ball of
  rational directions}, Geom. Funct. Anal. \textbf{7} (1997), no.~6, 996--1010.
  \MR{1487751 (99b:53061)}

\bibitem[Mat91]{Mather:1991}
J.~Mather, \emph{Action minimizing invariant measures for positive definite
  {L}agrangian systems}, Math. Z. \textbf{207} (1991), no.~2, 169--207.
  \MR{1109661 (92m:58048)}

\bibitem[Mat93]{Mather:1993}
\bysame, \emph{Variational construction of connecting orbits}, Ann. Inst.
  Fourier (Grenoble) \textbf{43} (1993), no.~5, 1349--1386. \MR{1275203
  (95c:58075)}

\bibitem[Oli08]{Oliveira:2008}
E.~Oliveira, \emph{Generic properties of {L}agrangians on surfaces: the
  {K}upka-{S}male theorem}, Discrete Contin. Dyn. Syst. \textbf{21} (2008),
  no.~2, 551--569. \MR{2385706 (2009b:37103)}

\bibitem[Pat91]{Paternain:1991}
Gabriel Paternain, \emph{Entropy and completely integrable {H}amiltonian
  systems}, Proc. Amer. Math. Soc. \textbf{113} (1991), no.~3, 871--873.
  \MR{1059632}

\bibitem[Roc70]{Rockafellar:1970}
T.~Rockafellar, \emph{Convex analysis}, Princeton Mathematical Series, No. 28,
  Princeton University Press, Princeton, N.J., 1970. \MR{0274683 (43 \#445)}

\bibitem[RR11]{RR:2011}
L.~Rifford and R.~Ruggiero, \emph{Generic properties of closed orbits for
  hamiltonian flows from ma\~n\'e viewpoint}, International Mathematics
  Research Notices \textbf{rnr231} (2011), 20 pages.

\bibitem[Sch16]{Schroder:2016}
Jan~Philipp Schr\"{o}der, \emph{Invariant tori and topological entropy in
  {T}onelli {L}agrangian systems on the 2-torus}, Ergodic Theory Dynam. Systems
  \textbf{36} (2016), no.~6, 1989--2014. \MR{3530474}

\end{thebibliography}

\providecommand{\bysame}{\leavevmode\hbox to3em{\hrulefill}\thinspace}
\providecommand{\MR}{\relax\ifhmode\unskip\space\fi MR }
\providecommand{\MRhref}[2]{%
  \href{http://www.ams.org/mathscinet-getitem?mr=#1}{#2}
}
\providecommand{\href}[2]{#2}

\end{document}